\numberwithin{equation}{section}
\newcommand{\Z}{\ensuremath{\mathbb{Z}}}
\newcommand{\R}{\ensuremath{\mathbb{R}}}
\newcommand{\B}{\ensuremath{\mathbb{B}}}
\newcommand{\F}{\ensuremath{\mathbb{F}}}
\newcommand{\mh}{\ensuremath{\mathbb{H}}}
\newtheorem{teo}{Theorem}[section]
\newtheorem{lemma}[teo]{Lemma}
\newtheorem{cor}[teo]{Corollary}
\newtheorem{df}[teo]{Definition}
\newtheorem{example}[teo]{Example}
\newtheorem{remark}[teo]{Remark}
\def\today{{\number\day\space
 \ifcase\month\or
  January\or February\or March\or April\or May\or June\or
  July\or August\or September\or October\or November\or December\fi
 \space\number\year}}
\begin{document}

\title[Asymptotic Traffic Flow in a Hyperbolic Network]{Asymptotic traffic Flow in a Hyperbolic Network: Definition and Properties of the Core}

\author[Yuliy Baryshnikov and Gabriel H. Tucci]
{Yuliy Baryshnikov and Gabriel H. Tucci}

\address{Bell Laboratories Alcatel--Lucent,
Murray Hill, NJ 07974, USA}
\email{ymb@alcatel-lucent.com}
\email{gabriel.tucci@alcatel-lucent.com}

\begin{abstract}
In this work we study the asymptotic traffic flow in Gromov's hyperbolic graphs. We prove that under certain mild hypotheses the traffic flow in a hyperbolic graph tends to pass through a finite set of highly congested nodes. These nodes are called the ``core" of the graph. We provide a formal definition of the core in a very general context and we study the properties of this set for several graphs.
\end{abstract}

\maketitle

\section{Introduction}

\vspace{0.3cm}
\noindent For more than 40 years the scientific community treated large complex networks as being completely random objects. This paradigm has its roots in the work of the mathematicians Paul Erd\"os and Alfred R\'enyi. In 1959, Erd\"os and R\'enyi suggested that such systems could be effectively modelled by connecting a set of nodes with randomly placed links. They wanted to understand what a ``typical" graph with $n$ nodes and $M$ edges looks like. The simplicity of their approach and the elegance of some their theorems generated a great momentum in graph theory, leading to the emergence of a field that focus on random graphs. 

\vspace{0.3cm}
\noindent Recent advances in the theory of complex networks show that even though many characteristic of the Erd\"os and R\'enyi model appear natural in large networks such as the Internet, the World Wide Web and many social networks, this model is not completely appropriate for their study. Over the past few years, there has been growing evidence that many communication networks have characteristics of negatively curved spaces \cite{H0, H1, H2, H3, H4}. From the large scale point of view, it has been experimentally observed that, on the Internet and other networks, traffic seems to concentrate quite heavily on a very small subset of nodes. For these reasons, we believe that many of the characteristics of large communication networks are intrinsic to negatively curved spaces.  More precisely, Gromov in his seminal work \cite{Gromov} introduced the notion of $\delta$--hyperbolicity for geodesic metric spaces. This concept is defined in terms of the $\delta$--slimness of geodesic triangles. We say that a geodesic metric space satisfies the $\delta$--slimness triangle condition, if for any geodesic triangle $ABC$, every side belongs to the $\delta$--vicinity of the union of the other two sides. Examples of such spaces are trees, the standard hyperbolic spaces and more generally, $\mathrm{CAT}(-k)$ spaces.  It is not clear yet how relevant Gromov's hyperbolic spaces are to real networks. However, they share many properties such as exponential growth, dominance of the edge and small diameter, also called the ``small world" property. Many of the recent pictures of the Internet and the World Wide Web suggest an hyperbolic nature also, see Figure \ref{internet} for a recent map of the Internet \cite{internet}.
\begin{figure}[!Ht]
  \begin{center}
    \includegraphics[width=6cm]{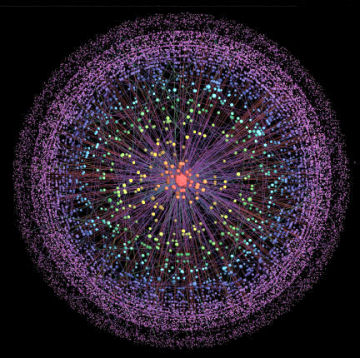}
    \caption{Visualization of the Internet at the AS level. This image was produced by Carmi, Havlin, Kirkpatrick, Shavitt and Shir and was published in \cite{internet}.}
    \label{internet}
  \end{center}
\end{figure}

\vspace{0.3cm}
\noindent We are interested in the ``large scale structure" of the network. Therefore, local properties of the network, such as the valencies, local clustering coefficients, etc., became irrelevant in our analysis. Specifically, we are looking at the properties that survive the ``rescaling" or ``renormalization" of the underlying metric. The main point to notice is that there is a certain ``large scale" structure in these complex networks. Instead of looking at very local geometrical properties we are interested in what happens in the limit when the diameter becomes very large. In this project we are mainly interested in the traffic behaviour as the size of the network increases.

\vspace{0.3cm}
\noindent In Section \ref{prelim}, we review the concept of Gromov's hyperbolic space and present some of the important examples and properties. We also recall the construction of the boundary of an hyperbolic space and its visual metric. In Section \ref{tree}, we study the traffic phenomena in a general locally finite tree. We prove that a big proportion of the total traffic passes through the root. In Section \ref{core}, we formally define the concept of the ``core" of a network and we provide some examples. Finally, in Section \ref{hyper} we characterize the ``core" in a Gromov's hyperbolic space. 

\vspace{0.3cm}
\noindent
{\it Acknowledgement:} We would like to thank Iraj Saniiee for many helpful discussions and comments. This work was supported by AFOSR Grant No. FA9550-08-1-0064.

\section{Preliminaries}\label{prelim} 

\noindent In this Section we review the notion of Gromov's $\delta$--hyperbolic space as well as some of the basic properties, theorems and constructions.

\subsection{$\delta$--Hyperbolic Spaces}
\noindent There are many equivalent definitions of Gromov's hyperbolicity but the one we use is that triangles are {\it slim}.

\begin{df}
Let $\delta>0$. A geodesic triangle in a metric space $X$ is said to be $\delta$--slim if each of its sides is contained in the $\delta$--neighbourhood of the union of the other two sides. A geodesic space $X$ is said to be $\delta$--hyperbolic if every triangle in $X$ is $\delta$--slim. 
\end{df}

\noindent It is easy to see that any tree is $0$-hyperbolic. Other examples of hyperbolic spaces include, any finite graph, the fundamental group of a surface of genus greater or equal than 2, the classical hyperbolic space, and any regular tessellation of the hyperbolic space (i.e. infinite planar graphs with uniform degree $q$ and $p$--gons as faces with $(p-2)(q-2)>4$).

\begin{df} (Hyperbolic Group) A finitely generated group $\Gamma$ is said to be word--hyperbolic if there is a finite generating set $S$ such that the Cayley graph $C(\Gamma,S)$ is $\delta$--hyperbolic with respect to the word metric for some $\delta$.
\end{df}

\noindent It turns out that if $\Gamma$ is a word hyperbolic group then for any finite generating set $S$ of $\Gamma$ the corresponding Cayley graph is hyperbolic, although the hyperbolicity constant depends on the choice of the generator $S$.

\subsection{Boundary of Hyperbolic Spaces}

\vspace{0.3cm}
\noindent We say that two geodesic rays $\gamma_{1}:[0,\infty)\to X$ and $\gamma_{2}:[0,\infty)\to X$ are equivalent and write $\gamma_{1}\sim\gamma_{2}$ if there is $K>0$ such that for any $t\geq 0$
$$
d(\gamma_{1}(t),\gamma_{2}(t))\leq K.
$$
It is easy to see that $\sim$ is indeed an equivalence relation on the set of geodesic rays. Moreover, two geodesic rays $\gamma_{1},\gamma_{2}$ are equivalent if and only if their images have finite Hausdorff distance. The Hausdorff distance is defined as the infimum of all the numbers $H$ such that the images of $\gamma_{1}$ is contained in the $H$--neighbourhood of the image of $\gamma_{2}$ and vice versa.

\vspace{0.3cm}
\noindent The boundary is usually defined as the set of equivalence classes of geodesic rays starting at the base--point, equipped with the compact--open topology. That is to say, two rays are ``close at infinity'' if they stay close for a long time. We make this notion precise.

\begin{df}(Geodesic Boundary)
Let $(X,d)$ be a $\delta$--hyperbolic metric space and let $x_0\in X$ be a base--point. We define the relative geodesic boundary of $X$ with respect to the base--point $x_0$ as 

\begin{equation}
 \partial{X}:=\{[\gamma]\,\,:\,\gamma:[0,\infty)\to X \,\,\text{is a geodesic ray with} \,\,\gamma(0)=x_0\}.
\end{equation}
\end{df}

\vspace{0.3cm}
\noindent It turns out that the boundary has a natural metric.

\begin{df}
Let $(X,d)$ be a $\delta$--hyperbolic metric space. Let $a>1$ and let $x_{0}\in X$ be a base--point. We say that a metric $d_{a}$ on $\partial X$ is a visual metric with respect to the base point $x_{0}$ and the visual parameter $a$ if there is a constant $C>0$ such that the following holds:

\begin{enumerate}
\item The metric $d_{a}$ induces the canonical boundary topology on $\partial X$.
\item For any two distinct points $p,q\in\partial X$, for any bi-infinite geodesic $\gamma$ connecting $p,q$ and any $y\in\gamma$ with  $d(x_{0},\gamma)=d(x_{0},y)$ we have that
$$\frac{1}{C}a^{-d(x_{0},y)}\leq d_{a}(p,q)\leq Ca^{-d(x_{0},y)}.$$
\end{enumerate}
\end{df}

\begin{teo}(\cite{Coor}, \cite{Harper})\label{visual_teo}
Let $(X,d)$ be a $\delta$--hyperbolic metric space. Then:
\begin{enumerate}
\item There is $a_{0}>1$ such that for any base point $x_{0}\in X$ and any $a\in (1,a_{0})$ the boundary $\partial X$ admits a visual metric $d_{a}$ with respect to $x_{0}$.
\item Suppose $d'$ and $d''$ are visual metrics on $\partial X$ with respect to the same visual parameter $a$ and the base points $x_{0}'$ and $x_{0}''$ accordingly. Then $d'$ and $d''$ are Lipschitz equivalent, that is there is $L>0$ such that
$$d'(p,q)/L\leq d''(p,q)\leq Ld'(p,q)\hspace{0.5cm}\text{for any}\,\,p,q\in\partial X.$$ 
\end{enumerate}
\end{teo}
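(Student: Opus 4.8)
The plan is to prove both statements using the Gromov product and the standard estimate relating it to the visual size of geodesic rays. Recall that for points $p,q$ in the boundary the relevant quantity is $(p\,|\,q)_{x_0}$, the Gromov product based at $x_0$, and that for a bi-infinite geodesic $\gamma$ connecting $p$ and $q$ the distance $d(x_0,\gamma)$ agrees with $(p\,|\,q)_{x_0}$ up to an additive error controlled by $\delta$. So the content of condition (2) in the definition of a visual metric is really the statement that $d_a(p,q)$ is comparable to $a^{-(p\,|\,q)_{x_0}}$.

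For part (1), I would construct the candidate $\rho_a(p,q) = a^{-(p\,|\,q)_{x_0}}$ directly. This function is symmetric and vanishes exactly on the diagonal, but it is not a metric in general because the triangle inequality can fail; the $\delta$--inequality $(p\,|\,q)_{x_0} \geq \min\{(p\,|\,r)_{x_0},(r\,|\,q)_{x_0}\} - \delta$ only yields an approximate ultrametric inequality. The standard remedy, which I would follow, is a chaining argument: define
\begin{equation}
d_a(p,q) = \inf \sum_{i=1}^{n} \rho_a(x_{i-1},x_i),
\end{equation}
where the infimum runs over all finite chains $p = x_0, x_1, \dots, x_n = q$. By construction $d_a$ satisfies the triangle inequality, and one shows $\frac{1}{2}\rho_a \leq d_a \leq \rho_a$ provided $a$ is close enough to $1$, i.e.\ provided $a^\delta$ is small enough; this is exactly where the threshold $a_0>1$ comes from. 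The comparison of $d_a$ with $\rho_a$, together with the comparison of $\rho_a$ with $a^{-d(x_0,\gamma)}$ coming from the Gromov product estimate, gives the two-sided bound in condition (2) and shows that $d_a$ induces the boundary topology, yielding condition (1).

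For part (2), I would argue that changing the base point from $x_0'$ to $x_0''$ changes the Gromov product only by a bounded additive amount: one has $|(p\,|\,q)_{x_0'} - (p\,|\,q)_{x_0''}| \leq d(x_0',x_0'')$ directly from the definition of the Gromov product. Exponentiating with base $a$, this additive bound becomes a multiplicative factor $a^{\pm d(x_0',x_0'')}$, so $\rho_a'$ and $\rho_a''$ are comparable with constant $L_0 = a^{d(x_0',x_0'')}$. Since each visual metric $d'$, $d''$ is comparable to the corresponding $\rho_a'$, $\rho_a''$ by condition (2), chaining the three comparisons gives the Lipschitz equivalence with $L = C' C'' L_0$ (absorbing the visual-metric constants).

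The main obstacle is the chaining estimate in part (1): verifying that the lower bound $d_a \geq \frac{1}{2}\rho_a$ survives the infimum over all chains. This requires an inductive argument on the chain length showing that no multi-step path can be much shorter than the direct comparison value $\rho_a(p,q)$, and it is precisely here that one must quantify how small $a^\delta$ needs to be; the additive defect $\delta$ in the approximate ultrametric inequality is what forces $a$ to lie in a restricted interval $(1,a_0)$ rather than being arbitrary. Once this estimate is in hand the topological statement in condition (1) and the base-point independence in part (2) follow cleanly from the additive stability of the Gromov product.
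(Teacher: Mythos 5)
The paper does not actually prove this theorem: it is quoted from the literature (\cite{Coor}, \cite{Harper}) and used as a black box, so there is no internal proof to compare against. Your sketch is a faithful reconstruction of the standard argument in those references: introduce the Gromov product $(p\,|\,q)_{x_0}$, note that it agrees with $d(x_0,\gamma)$ up to an additive constant depending on $\delta$, set $\rho_a=a^{-(p|q)_{x_0}}$, and repair the failure of the triangle inequality by the chain construction, with the restriction $a\in(1,a_0)$ arising exactly where you place it --- in the inductive lower bound for chains, which needs the multiplicative defect $a^{\delta}$ close enough to $1$ (the classical statement gives $d_a\geq(3-2\sqrt{2})\rho_a$ rather than your $\tfrac12\rho_a$, but the precise constant is immaterial). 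The only substantive caveats are (i) the chaining lower bound is the entire technical content of part (1) and your proposal defers it rather than proves it, and (ii) for boundary points the Gromov product must be defined as a $\liminf$ over sequences, so the additive stability under change of base point holds only up to an extra $O(\delta)$ term --- harmless, but worth stating. For part (2) you could bypass the Gromov product entirely: condition (2) of the definition pins each visual metric to $a^{-d(x_0,\gamma)}$ up to a multiplicative constant, and $|d(x_0',\gamma)-d(x_0'',\gamma)|\leq d(x_0',x_0'')$ by the triangle inequality, which gives the Lipschitz equivalence directly; your route is correct but routes through more machinery than needed.
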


\vspace{0.2cm}
\noindent The metric on the boundary is particularly easy to understand when $(X,d)$ is a tree. In this case $\partial X$ is the space of ends of $X$. The parameter $a_{0}$ from the above proposition is $a_{0}=\infty$ here and for some base point $x_{0}\in X$ and $a>1$ the visual metric $d_{a}$ can be given by an explicit formula:
$$
d_{a}(p,q)=a^{-d(x_{0},y)}
$$
for any $p,q\in\partial X$ where $[x_{0},y]=[x_{0},p)\cap [x_{0},q)$ so that $y$ is the bifurcation point for the geodesic rays $[x_{0},p)$ and $[x_{0},q)$.

\vspace{0.2cm}
\noindent Here are some other examples of boundaries of hyperbolic spaces (for more on this topic see \cite{Coor, Harper, Gromov}.)

\begin{example}
\begin{enumerate}
\item If $X$ is a finite graph then $\partial X=\emptyset$.
\item If $X$ is infinite cyclic group then $\partial X$ is homeomorphic to the set $\{0,1\}$ with the discrete topology.
\item If $n\geq 2$ and $X=\F_{n}$, the free group of rank $n$, then $\partial X$ is homeomorphic to the space of ends of a regular $2n$--valent tree, that is to a Cantor set.
\item Let $S_{g}$ be a closed oriented surface of genus $g\geq 2$ and let $X=\pi_{1}(S_{g})$. Then $X$ acts geometrically on the hyperbolic plane $\mh^{2}$ and therefore the boundary is homeomorphic to the circle $S^{1}$. 
\item Let $M$ be a closed $n$--dimensional Riemannian manifold of constant negative sectional curvature and let $X=\pi_{1}(M)$. Then $X$ is word hyperbolic and $\partial X$ is homeomorphic to the sphere $S^{n-1}$.
\item The boundary of the classical $n$ dimensional hyperbolic space $\mathbb{H}^n$ is $S^{n-1}$.
\end{enumerate}
\end{example}

\section{Traffic Flow in a Tree}\label{tree}

\noindent In this Section we study the asymptotic geodesic traffic flow in a tree. More specifically, let $\{k_{l}\}_{l=0}^{\infty}$ be a sequence of positive integers with $k_{0}=1$. For each sequence like this we consider the infinite tree $T$ with the property that each element at depth $l$ has $k_{l+1}$ descendants. In other words, the root has $k_{1}$ descendants, each node in the first generation has $k_{2}$ descendants and so on. The root is considered the 0 generation. Let us denote by $T_{n}$ the finite tree generated by the first $n$ generations of $T$, and let $N$ be the number of elements in $T_{n}$. It is clear that
$$
N(n)=1+k_{1}+k_{1}k_{2}+\ldots+k_{1}k_{2}\ldots k_{n}=\sum_{l=0}^{n}{\prod_{i=0}^{l}{k_{i}}}.
$$
\begin{figure}[!Ht]
  \begin{center}
    \includegraphics[width=5cm]{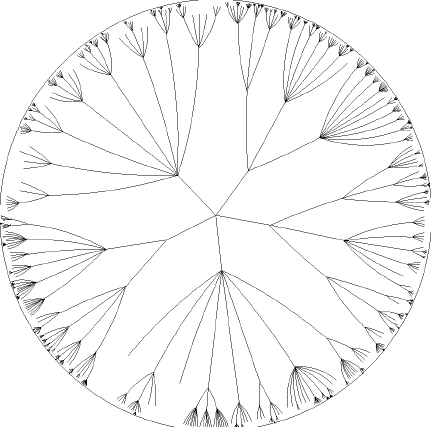}
    \caption{Example of a tree embedded in $\B^{2}$.}
    \label{3-tree}
  \end{center}
\end{figure}
\noindent Assume that very node in the tree is communicating with every other node and they are transmitting a unit flow of data. Therefore, the total traffic in the network is $\frac{N(N-1)}{2}$ (or $N(N-1)$ if we consider bidirectional flows). We are interested in understanding how much of the traffic goes through the root? Also, what is the proportion of this traffic as $n$ goes to infinity? More generally, what is the proportion of the traffic that passes through a node in the $l^{th}$--generation as $n\to\infty$? Giving a node $v$ in the tree let us denote $l_{n}(v)$ the traffic flow passing through $v$ and $p_{n}(v)$ the proportion of traffic through $v$. It is not difficult to compute the total flow through the root of the tree. If we remove the root we have $k_1$ disconnected trees with $\frac{N-1}{k_1}$ elements each. Hence, we see that
\begin{equation}
l_{n}(\mathrm{root})=\binom{k_1}{2}\cdot\Bigg(\frac{N-1}{k_1}\Bigg)^{2}+N-1=\frac{k_1-1}{2k_1}\cdot(N-1)^2+N-1.
\end{equation}
Therefore, the proportion of traffic through the root is 
\begin{equation}
p_{n}(\mathrm{root})=\frac{2\,l_{n}(\mathrm{root})}{N(N-1)}=\frac{k_1-1}{k_{1}}\cdot\frac{N-1}{N}+\frac{2}{N}.
\end{equation}
Hence, $p_{n}(\mathrm{root})$ converges to $1-\frac{1}{k_1}$ as $n\to\infty$. This is the asymptotic proportion of the traffic through the root. We denote this quantity by $ap(\mathrm{root})$. More generally,
\begin{equation}
ap(v):=\liminf_{n\to\infty}{p_{n}(v)}.
\end{equation} 
In the next theorem we compute the asymptotic proportion of the traffic for every node in $T_{n}$.

\begin{teo}
Let $v$ be a node in $T_{n}$ and let $l$ be its depth.  The asymptotic proportion of the traffic through $v$ is 
\begin{equation}
ap(v)=\frac{1}{\beta(l)}\Bigg(2-\frac{1}{\beta(l)}-\frac{1}{\beta(l+1)} \Bigg),
\end{equation}
where 
\begin{equation}
\beta(l):=k_{1}k_{2}\ldots k_{l}
\end{equation}
\end{teo}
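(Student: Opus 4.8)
The plan is to compute the flow $l_n(v)$ exactly by a cut argument and then pass to the limit. Fix a node $v$ at depth $l$ and let $m=m(n)$ be the number of nodes of the subtree of $T_n$ rooted at $v$. Since the combinatorial type of a node depends only on its depth, every depth-$l$ node has an isomorphic rooted subtree, so $m$ is well defined; moreover, removing $v$ from $T_n$ splits it into $k_{l+1}$ pairwise isomorphic subtrees, each of size $s:=(m-1)/k_{l+1}$ (those hanging from the children of $v$), together with one ``outer'' component of size $N-m$ (everything not below $v$). A pair of distinct nodes $\{a,b\}$ routes its unit of flow through $v$ exactly when $v$ lies on the geodesic $[a,b]$, i.e. when $a$ and $b$ fall in different components of $T_n\setminus\{v\}$, or when one of them is $v$. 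Counting these pairs gives
\begin{equation}
l_n(v) = (N-1) + \binom{N-1}{2} - \binom{N-m}{2} - k_{l+1}\binom{s}{2},
\end{equation}
where $N-1$ counts the pairs having $v$ as an endpoint and the three binomials count all remaining pairs minus those lying inside a single component. (As a check, for the root $l=0$, $m=N$, $s=(N-1)/k_1$, the outer term vanishes and this reduces to the stated formula for $l_n(\mathrm{root})$.)

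Next I would pin down the asymptotics of $m/N$. Since the subtree rooted at $v$ meets depth $l+j$ in exactly $\beta(l+j)/\beta(l)$ nodes, we have
$$m = \frac{1}{\beta(l)}\sum_{l'=l}^{n}\beta(l'), \qquad N = \sum_{l'=0}^{n}\beta(l'),$$
whence
$$\frac{m}{N} = \frac{1}{\beta(l)}\left(1 - \frac{\sum_{l'=0}^{l-1}\beta(l')}{N}\right).$$
Because each $\beta(l')\geq 1$, the total $N=N(n)$ diverges as $n\to\infty$ while the numerator $\sum_{l'=0}^{l-1}\beta(l')$ is a fixed finite constant; hence $m/N\to 1/\beta(l)$. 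Geometrically, the $\beta(l)$ isomorphic subtrees rooted at depth $l$ exhaust everything but the $l$ topmost levels, which is an asymptotically vanishing fraction of the tree.

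Finally I would substitute into $p_n(v)=2\,l_n(v)/\big(N(N-1)\big)$ and let $n\to\infty$. Dividing each term of $l_n(v)$ by $N(N-1)$, the linear term $N-1$ contributes $O(1/N)\to 0$, while the three binomials contribute $1$, $-(1-x)^2$, and $-x^2/k_{l+1}$, where $x:=\lim m/N = 1/\beta(l)$. This yields
$$ap(v) = 2x - x^2 - \frac{x^2}{k_{l+1}} = \frac{2}{\beta(l)} - \frac{1}{\beta(l)^2} - \frac{1}{\beta(l)^2 k_{l+1}},$$
and rewriting $\beta(l)^2 k_{l+1} = \beta(l)\beta(l+1)$ via $\beta(l+1)=\beta(l)k_{l+1}$ gives exactly $ap(v)=\frac{1}{\beta(l)}\big(2-\frac{1}{\beta(l)}-\frac{1}{\beta(l+1)}\big)$. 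Since the limit exists, the $\liminf$ in the definition of $ap(v)$ coincides with it.

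The only genuine subtleties are the cut count and the claim $m/N\to 1/\beta(l)$: the former requires correctly including the pairs that have $v$ as an endpoint (the $N-1$ term), and the latter rests on the elementary but essential observation that the finitely many top levels are asymptotically negligible regardless of the growth rates $k_l$. Everything else is routine bookkeeping with the binomial coefficients.
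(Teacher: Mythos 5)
Your proof is correct and follows essentially the same route as the paper: both remove $v$, identify the $k_{l+1}+1$ components of the cut, establish that the subtree below $v$ occupies an asymptotic fraction $1/\beta(l)$ of the nodes, and pass to the limit in $2\,l_n(v)/(N(N-1))$. The only (cosmetic) difference is that you count cross-component pairs by complementary counting via $\binom{N-1}{2}$ minus within-component pairs, and keep an exact formula where the paper writes an asymptotic one.
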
 

\begin{proof}
Take $v\in T_{n}$ that is at depth $l$. Consider $T_{n}\setminus\{v\}$ the graph formed from $T_{n}$ by deleting the vertex $v$. This graph has $k_{l+1}+1$ connected components with cardinality $s_{1},s_{2},\ldots,s_{k_{l+1}+1}$. Then the flow $l_{n}(v)$ is equal to  
$$
l_{n}(v)=\sum_{i<j}{s_{i}s_{j}}.
$$
Let $c_{n}(v)$ be the number of descendants of $v$ in $T_{n}$. Then as $n\to\infty$
$$
l_{n}(v)\approx \big(N-1-c_{n}(v)\big)\cdot c_{n}(v)+\binom{k_{l+1}}{2}\cdot \Bigg(\frac{c_{n}(v)}{k_{l+1}}\Bigg)^{2}.
$$
It is easy to see that 
$$
c_{n}(v)=k_{l+1}+k_{l+1}k_{l+2}+\ldots+k_{l+1}k_{l+2}\ldots k_{n}.
$$
Since the number of nodes in $T_{n}$ is equal to
$$
N(n)=1+k_{1}+k_{1}k_{2}+\ldots+k_{1}k_{2}\ldots k_{n}=\sum_{l=0}^{n}{\prod_{i=0}^{l}{k_{i}}}
$$
we see that 
$$
\lim_{n\to\infty}{\frac{c_{n}(v)}{N(n)}}=\frac{1}{k_{1}k_{2}\ldots k_{l}}=\frac{1}{\beta(l)}.
$$
We want to compute $ap(v)=\lim_{n\to\infty}{\frac{2l_{n}(v)}{N(N-1)}}$. We first see that doing some algebra we obtain that 
$$
\lim_{n\to\infty}{\frac{2\big(N-1-c_{n}(v)\big)\cdot c_{n}(v)}{N(N-1)}}=\frac{2}{\beta(l)}\cdot \Bigg(1-\frac{1}{\beta(l)}\Bigg)
$$
and hence,
$$
\lim_{n\to\infty}{\frac{2\binom{k_{l+1}}{2}\cdot \Big(\frac{c_{n}(v)}{k_{l+1}}\Big)^{2}}{N(N-1)}}=\frac{k_{l+1}-1}{k_{l+1}}\cdot\frac{1}{\beta(l)^2}.
$$
Therefore,
\begin{eqnarray*}
ap(v) & = & \frac{2}{\beta(l)}\cdot \Bigg(1-\frac{1}{\beta(l)}\Bigg)+\frac{k_{l+1}-1}{k_{l+1}}\cdot\frac{1}{\beta(l)^2}\\
& = & \frac{1}{\beta(l)}\cdot \Bigg(2-\frac{1}{\beta(l)}-\frac{1}{\beta(l+1)} \Bigg).
\end{eqnarray*}
\end{proof}

\begin{cor}
If the tree is $(k+1)$--regular. Then 
\begin{equation}
ap(v)=\frac{1}{k^{l}}\Bigg(2-\frac{1}{k^{l}}-\frac{1}{k^{l+1}} \Bigg).
\end{equation}
In particular, $ap(\mathrm{root})=1-\frac{1}{k}$.
\end{cor}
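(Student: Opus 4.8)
The plan is to obtain this statement as an immediate specialization of the preceding Theorem, so the work is essentially bookkeeping rather than any new estimation. First I would pin down the sequence $\{k_{l}\}_{l=0}^{\infty}$ that realizes a $(k+1)$--regular tree within the framework of this section. Since every vertex below the root has one parent and $k$ children, while the root itself has $k$ children, the branching data is $k_{l}=k$ for all $l\geq 1$, together with the standing convention $k_{0}=1$. With this choice the quantity $\beta(l)=k_{1}k_{2}\cdots k_{l}$ collapses to a pure power, $\beta(l)=k^{l}$, and likewise $\beta(l+1)=k^{l+1}$.

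Next I would substitute these two values directly into the formula
$$
ap(v)=\frac{1}{\beta(l)}\left(2-\frac{1}{\beta(l)}-\frac{1}{\beta(l+1)}\right)
$$
supplied by the Theorem. Replacing $\beta(l)$ by $k^{l}$ and $\beta(l+1)$ by $k^{l+1}$ yields exactly
$$
ap(v)=\frac{1}{k^{l}}\left(2-\frac{1}{k^{l}}-\frac{1}{k^{l+1}}\right),
$$
which is the asserted expression; no further algebra is required.

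Finally, for the ``in particular'' clause I would set $l=0$, taking care with the indexing convention so that $\beta(0)$ is the empty product and hence $\beta(0)=1$. The formula then gives $ap(\mathrm{root})=2-1-\frac{1}{k}=1-\frac{1}{k}$, in agreement with the earlier direct computation of $p_{n}(\mathrm{root})$. The only point demanding any real attention — and the closest thing to an obstacle here — is confirming that the ``regular'' hypothesis is correctly translated into $k_{l}=k$ for all $l\geq 1$, and that the base case $l=0$ is handled with the convention $\beta(0)=1$; once these conventions are fixed, the entire corollary follows by substitution into the Theorem.
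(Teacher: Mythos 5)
Your proof is correct and matches the paper's (implicit) argument exactly: the corollary is a direct substitution of $\beta(l)=k^{l}$ into the preceding theorem, with $\beta(0)=1$ handling the root case. Your remark about the convention is apt — for the formula $ap(\mathrm{root})=1-\tfrac{1}{k}$ to hold one must indeed read ``$(k+1)$--regular'' as $k_{l}=k$ for all $l\geq 1$ (so the root has degree $k$ rather than $k+1$), which is precisely the convention the paper is using.
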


\section{Definition of the Core of a Network}\label{core}

\vspace{0.3cm}
\noindent In this Section we define the core of a network. More precisely, let $X$ be an infinite, simple and locally finite graph, and let $x_{0}$ be a node in $X$ considered the root or base--point of the graph. Let 
$$ 
\{x_{0}\}=X_{0}\subset X_{1}\subset X_{2}\subset \ldots\subset X_{n}\subset\ldots\subset X
$$
be a sequence of finite subsets with the properties that: 
\begin{itemize}
\item $\cup_{n\geq 1}{X_{n}}=X$,
\item for every $x\in X_{n}$ and for every geodesic segment $[0,x]$ connecting $0$ and $x$  then every intermediate point belongs to $X_{n}$. 
\end{itemize}

\noindent For each fixed $n$ consider a measure $\mu_{n}$ supported on $X_{n}$. This measure defines a traffic flow between the elements in $X_{n}$ in the following way; given $u$ and $v$ nodes in $X_{n}$ the traffic between these two nodes splits equally between the geodesics joining $u$ and $v$, and is equal to $\mu_{n}(v)\mu_{n}(u)$. Note that the uniform measure determines a uniform traffic between the nodes. Given a subset $A\subset X_{n}$ we denote by $\mathcal{L}_{n}(A)$ the total traffic passing through the set $A$. The total traffic in the network $X_{n}$ is equal to $\mathcal{L}_{n}(X_{n})$.

\vspace{0.3cm}
\noindent Let $N=N(n)=|X_{n}|$ be the number of nodes in $X_{n}$.  Given a node $y\in X_{n}$ and $r>0$ we denote by $B(y,r)$ the ball of center $y$ and radius $r$,
$$
B(y,r)=\{x\in X\,:\,d(x,y)\leq r\}.
$$

\begin{df}
Let $\alpha$ be a number between $0$ and $1$. We say that a point $y\in X$ is in the asymptotic $\alpha$--core if 
\begin{equation}
\liminf_{n\to\infty}{\frac{\mathcal{L}_{n}(B(y,r_{\alpha}))}{\mathcal{L}_{n}(X_{n})}}\geq \alpha
\end{equation}
for some $r_{\alpha}$ independent on $n$.
The set of nodes in the asymptotic $\alpha$--core is denoted by $C_{\alpha}$. The core of the graph is the union of all the $\alpha$--cores for all the values of $\alpha$. This set is denoted by $\mathcal{C}$
\begin{equation}
\mathcal{C} = \cup_{\alpha>0}{\,C_\alpha}.
\end{equation}
\end{df}
\noindent We say that a graph has a core if $\mathcal{C}$ is non-empty.

\vspace{0.2cm}
\noindent Roughly speaking a point $y$ belongs to the $\alpha$--core if there exists a finite radius $r$, independent on $n$, such that the proportion of the total traffic passing through the ball of center $y$ and radius $r$ behaves asymptotically as $\Theta(\alpha N^{2})$ as $N\to\infty$.

\begin{example}
Let $\{k_{l}\}_{l=0}^{\infty}$ be a sequence of positive integers and let $T$ be the associated tree as defined in Section \ref{tree}. Consider the increasing sequence of sets $\{T_{n}\}_{n=1}^{\infty}$ where $T_{n}$ is the truncated tree at depth $n$, and $\mu_{n}$ is uniform measure in $T_{n}$. In the previous Section, we saw that the asymptotic proportion of traffic through the root is $ap(\mathrm{root})=1-\frac{1}{k_1}$. Therefore, this graph has a non-empty core. Moreover, the core is the whole graph! However, for every $\alpha$ the $\alpha$--core is finite.
\end{example}

\vspace{0.3cm}
\begin{example}
Let $X=\Z^{p}$ and let $X_{n}=\{-n,\ldots,-1,0,1,\ldots,n\}^{p}$ with the uniform measure. A simple but lengthy analytic calculation shows us that for every $x\in X$ the traffic flow through $v$ behaves as $O(N^{1+\frac{1}{p}})$ where $N=|X_{n}|$. Hence, if $p\geq 2$ the core of this graph is empty. This is not the case for $p=1$.
\end{example}

\section{Properties of the Core for $\delta$--Hyperbolic Networks}\label{hyper}

\noindent In this Section we prove the existence of a core for Gromov's $\delta$--hyperbolic graphs. More precisely, consider $X$ an infinite, simple and locally finite $\delta$--hyperbolic graph with a fixed base--point $x_{0}$. As an example consider the case where $X$ is a group $\Gamma$ acting isometrically in the hyperbolic space $\B^{n}$. We can always think the group $\Gamma$ as a graph embedded in $\B^{n}$. More specifically, the vertices of this graph are the orbits of the point zero under the group action $\{\gamma(0)\}_{\gamma\in\Gamma}$. The point $0$ is considered the root of this graph. Doing an abuse of notation we denote this graph also by $\Gamma$. We normalize the metric so every two adjacent nodes in this graph are at distance 1. Therefore, given two different nodes $u$ and $v$ the distance $d(u,v)$ between them is the minimum number of hops to go from one to the other.  

\begin{figure}[!Ht]
  \begin{center}
    \includegraphics[width=5cm]{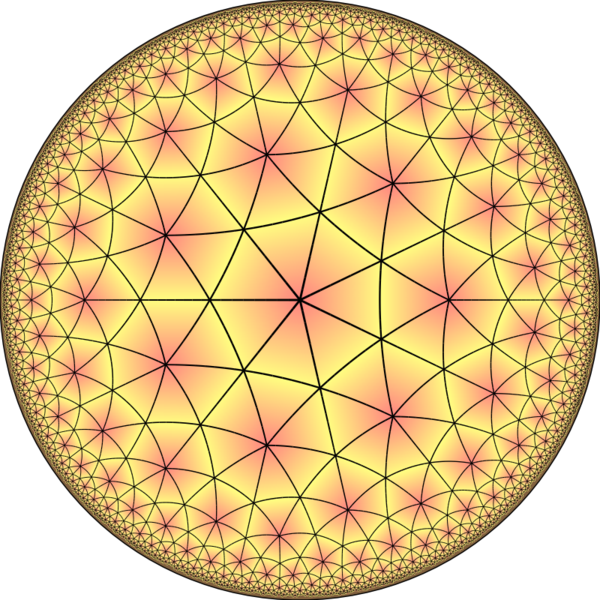}
    \caption{Heptagonal tilling of the hyperbolic space}    
  \end{center}
\end{figure} 
 
\noindent Let 
$$ 
\{0\}=X_{0}\subset X_{1}\subset X_{2}\subset \ldots\subset X_{n}\subset\ldots\subset X
$$
be a sequence of finite subsets as in Section \ref{core}.

\vspace{0.2cm}
\noindent Denote as usual by $\partial X_{n}$ the boundary set of $X_{n}$ in $X$ and recall that a point $y$ belongs to $\partial X_{n}$ if $y\in X_{n}$ and there exists $z\in X\setminus X_{n}$ such that $z\sim y$ ($z$ and $y$ are adjacent). Let $N=N(n)=|X_{n}|$ be the number of elements in $X_{n}$. 

\vspace{0.3cm}
\noindent For each fixed $n$ consider a measure $\mu_{n}$ supported on $X_{n}$ and a traffic measure as we described in Section \ref{core}. We are interested in understanding the asymptotic traffic flow through an element in the network as $n$ grows. More precisely, for each $v$ element in $X$ and $r>0$ consider 
$$
B(v,r):=\{x\in X\,:\,d(v,x)\leq r\},
$$
the ball centred at $v$ and radius $r$. This ball will be eventually contained in $X_{n}$ for every $n$ sufficiently large. Let $\mathcal{L}_{n}(v,r)$ be traffic flow passing through this ball in the network $X_{n}$. We want to study $\mathcal{L}_{n}(v,r)$ as $n$ goes to infinity. As we define in Section \ref{core} an element $v$ is in the asymptotic $\alpha$--core if 
$$
\liminf_{n\to\infty}{\frac{\mathcal{L}_{n}(v,r_{\alpha})}{\mathcal{L}_{n}(X_{n})}}\geq \alpha
$$
for some radius $r_{\alpha}$ independent on $n$. In other words, asymptotically a fraction $\alpha$ of the total traffic passes within bounded distance of $v$.

\vspace{0.3cm}
\noindent Each measure $\mu_{n}$ supported in $X_{n}\subset X\subset\partial X$ defines a probability visual Borel measure $\mu_{n}^{v}$ in the boundary $\partial X$. The way the measure $\mu_{n}^{v}$ is defined is described in the following definition.

\begin{df}
Let $A\subseteq \partial X$ be a Borel subset. For each $a\in A$ consider the set of all sequences $\{x_{k}\}_{k=0}^{\infty}$ such that:
$x_{0}=0$, the sequence is a geodesic ray in $X$ that converges to $a$. These sequences are called rays connecting $0$ with $a$. Let $C_{A}$ be the set of points in $X$ that belong to some ray connecting $0$ with $a$ for some $a\in A$. We define 
\begin{equation}
\mu_{n}^{v}(A):=\frac{\mu_{n}(X_{n}\cap C_{A})}{\mu_{n}(X_{n})}.
\end{equation}
\end{df}

\begin{teo} \label{main1}
Assume that the sequence of visual measures $\{\mu_{n}^{v}\}_{n=1}^{\infty}$ converges to a measure $\mu_{\infty}$.
\begin{itemize}
\item If the limiting measure is concentrated at a point, then $0$ is not in the $\alpha$--core for any $\alpha>0$.
\item If the limiting measure is purely atomic and supported in more than one point, then for some $\alpha_{*}<1$ the $\alpha$--core is empty for all $\alpha>\alpha_{*}$.
\item If the limiting measure is non--atomic then $0$ belongs to the $\alpha$--core for all $\alpha<1$.
\end{itemize} 
\end{teo}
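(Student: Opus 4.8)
The plan is to translate ``traffic through a ball'' into a statement about the Gromov product and then push everything onto $\partial X$. Write $(u\,|\,w)_0=\tfrac12\big(d(0,u)+d(0,w)-d(u,w)\big)$. The standard $\delta$--hyperbolic estimate gives a constant $K=K(\delta)$ with $\big|d(0,[u,w])-(u\,|\,w)_0\big|\le K$, so that $[u,w]$ meets $B(0,r)$ essentially iff $(u\,|\,w)_0\le r$, up to the additive error $K$. After normalising each $\mu_n$ to a probability measure, $\mathcal L_n(X_n)=\tfrac12\big(1-\sum_x\mu_n(x)^2\big)$, and since the hypothesis on $\mu_\infty$ forces the mass to leave every fixed ball the diagonal term is negligible and $\mathcal L_n(X_n)\to\tfrac12$; thus $\mathcal L_n(0,r)/\mathcal L_n(X_n)$ is, up to $o(1)$, the $\mu_n\times\mu_n$--mass of the pairs with $(u\,|\,w)_0\lesssim r$.

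Next I would transport this to the boundary. For deep $u$ let $\hat u\in\partial X$ be an endpoint of a geodesic ray through $0$ and $u$; the escape of mass shrinks the shadows, so that $\mu_n^v$ is, up to $o(1)$, the push--forward of $\mu_n$ under $u\mapsto\hat u$, and $(u\,|\,w)_0=(\hat u\,|\,\hat w)_0+O(\delta)$. With the visual bound $d_a(\hat u,\hat w)\asymp a^{-(\hat u\,|\,\hat w)_0}$ from Theorem~\ref{visual_teo}, the condition $(u\,|\,w)_0\le r$ becomes $d_a\ge\varepsilon_r$ with $\varepsilon_r\asymp a^{-r}$, whence
\[
\frac{\mathcal L_n(0,r)}{\mathcal L_n(X_n)}=(\mu_n^v\times\mu_n^v)\{d_a\ge\varepsilon_r\}+o(1).
\]
Since $\{d_a\ge\varepsilon\}$ is closed, $\{d_a>\varepsilon\}$ open, and $\{d_a=\varepsilon\}$ is $(\mu_\infty\times\mu_\infty)$--null for all but countably many $\varepsilon$, the portmanteau theorem applied to $\mu_n^v\times\mu_n^v\to\mu_\infty\times\mu_\infty$ gives, for all but countably many $r$,
\[
\liminf_{n\to\infty}\frac{\mathcal L_n(0,r)}{\mathcal L_n(X_n)}=(\mu_\infty\times\mu_\infty)\{d_a\ge\varepsilon_r\},
\]
which increases as $r\to\infty$ to $(\mu_\infty\times\mu_\infty)\{p\ne q\}=1-\sum_i w_i^2$, where $w_i=\mu_\infty(\{p_i\})$ runs over the atoms. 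Reading this off: for $\mu_\infty=\delta_{p_0}$ the closed--set upper bound forces the ratio to $0$ for every $r$, so $0\notin C_\alpha$ for all $\alpha>0$; for $\mu_\infty$ non--atomic the supremum is $1$, so a large enough $r$ puts $0\in C_\alpha$ for every $\alpha<1$; for $\mu_\infty$ purely atomic with at least two atoms the supremum equals $\alpha_*:=1-\sum_i w_i^2\in(0,1)$, giving $0\notin C_\alpha$ for $\alpha>\alpha_*$.

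To upgrade the last case from ``$0\notin C_\alpha$'' to ``$C_\alpha=\emptyset$'' I would bound the traffic through an arbitrary ball $B(v,r)$. A cross pair whose endpoints tend to distinct atoms $p_i,p_j$ passes through $B(v,r)$ essentially iff $d\big(v,(p_i,p_j)\big)\lesssim r$; within a single direction--cluster the mass sits in an $O(\delta)$--neighbourhood of one ray and the mass straddling the fixed point $v$ tends to $0$, so the within--cluster contribution through $B(v,r)$ is negligible. Hence $B(v,r)$ only sees the atom pairs it separates, and
\[
\limsup_{n\to\infty}\frac{\mathcal L_n(v,r)}{\mathcal L_n(X_n)}\le 1-\sum_{c}\Big(\sum_{i\in c}w_i\Big)^2\le 1-\sum_i w_i^2=\alpha_*,
\]
where $c$ ranges over the groups of atoms left unseparated by the ball and the last inequality is convexity of $t\mapsto t^2$ (coalescing atoms only increases the sum of squares). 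Thus no point lies in $C_\alpha$ for $\alpha>\alpha_*$.

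I expect the main obstacle to be the rigorous identification of $\mu_n^v$ with the push--forward $u\mapsto\hat u$: as defined, $\mu_n^v$ charges each point with its whole boundary shadow, so one must show that under the standing hypotheses the shadows shrink --- equivalently, that an atomic or non--atomic limit forces the $\mu_n$--mass out of every fixed ball --- and that the $O(\delta)$ and $O(a^{-r})$ errors are uniform enough to survive the passage to the limit. The second delicate point is the vanishing of the within--cluster traffic through a fixed ball in the purely atomic case, which is precisely where the escape of mass to infinity is used.
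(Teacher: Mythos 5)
Your argument is correct and structurally the same as the paper's: both reduce the traffic through $B(0,r)$ to the mass, under $\mu_n^{v}\times\mu_n^{v}$, of boundary pairs whose connecting geodesic comes within distance roughly $r$ of the root, pass to the limit measure $\mu_\infty\times\mu_\infty$, and read off the three cases from the diagonal mass $\sum_i w_i^2$. The difference is in how the geometric reduction is carried out. The paper first invokes Bonk--Kleiner to embed $X$ quasi-isometrically in the Poincar\'e disk and then uses the explicit identity $\sin(\theta/2)=1/\cosh(R)$ of Lemma \ref{lemma} to turn ``the geodesic enters $B(0,r)$'' into a spherical-cap condition $E_u^{r\pm C}$; you stay intrinsic, using $|d(0,[u,w])-(u\,|\,w)_0|\le K(\delta)$ together with the visual metric of Theorem \ref{visual_teo}. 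These are the same estimate in different clothing, but your route buys a few things: it avoids the embedding machinery; it makes the passage to the limit honest via portmanteau and continuity sets, where the paper simply asserts the two integral inequalities; and it treats the three cases uniformly through the monotone limit $(\mu_\infty\times\mu_\infty)\{d_a\ge\varepsilon_r\}\uparrow 1-\sum_i w_i^2$ instead of separate cap estimates plus a compactness argument for the non-atomic case. Two remarks. First, your displayed ``equality up to $o(1)$'' should really be a two-sided sandwich with radii $r\pm C$ as in the paper; this is harmless since you only take monotone limits in $r$. Second, your cluster analysis for upgrading ``$0\notin C_\alpha$'' to ``$C_\alpha=\emptyset$'' in the purely atomic case is more than is needed: since $B(y,r)\subset B(0,r+d(0,y))$ and your bound $\alpha_*$ holds at the origin for every radius, emptiness of the $\alpha$-core for $\alpha>\alpha_*$ follows immediately (the paper leaves even this step implicit). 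Your closing concern about identifying $\mu_n^{v}$ with a push-forward is legitimate --- the shadow-based definition of $\mu_n^{v}$ is not obviously additive when mass lingers near the root --- but the paper does not address this either, so it is a gap you share with, rather than one you introduce beyond, the original proof.
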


\vspace{0.3cm}

\noindent Before proving our main theorem we recall a result of Bonk and Kleiner (see \cite{Bonk} for more details). This one says that, up to a quasi--isometric embedding, we can assume without loss of generality that our graph is embedded in the hyperbolic space $\mh^{n}$ for some $n$. Consider $\B^{n}$ the Poincar\'e ball as the model for hyperbolic space. We assume that under the previous quasi--isometric embedding $x_{0}$ is mapped to 0. The boundary $\partial \B^{n}$ is homeomorphic to $S^{n-1}$ the unit sphere in $\R^{n}$. For any pair of points in the boundary $\partial\B^{n}=S^{n-1}$ of the hyperbolic space, it is natural to consider the spherical metric. In other words, given $u$ and $v$ in $S^{n-1}$ we can define $d_{S^{n-1}}(u,v)=\theta$ as the angle $[0,\pi]$ between these two points in $S^{n-1}$. On the other hand, let $\gamma_{uv}$ be the bi-infinite geodesic connecting $u$ and $v$ in the Poincar\'e metric and let 
$$
h(u,v):=d_{\B^{n}}(0,\gamma_{uv}).
$$
The next Lemma relates the angle $\theta$ with $h(u,v)$.
\begin{lemma}\label{lemma}
Let $u$ and $v$ in $S^{n-1}$ such that $d_{S^{n-1}}(u,v)=\theta$. Then if $h(u,v):=R$ the following relation holds
\begin{equation}\label{eq_theta}
\sin\big(\theta/2\big)=\frac{1}{\cosh(R)}.
\end{equation}
\end{lemma}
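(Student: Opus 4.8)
The plan is to reduce everything to the two-dimensional Poincaré disk and then perform an elementary Euclidean computation. First I would observe that the three points $0,u,v$ span a $2$-plane $P$ through the origin, and that $P\cap\B^n$ is a totally geodesic copy of $\B^2$; since the bi-infinite geodesic $\gamma_{uv}$ is the fixed-point set of the reflection across $P$, it lies entirely in $P$, and the distance $d_{\B^n}(0,\gamma_{uv})$ is realized inside this disk. Thus it suffices to prove the identity in $\B^2$.

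Next I would choose coordinates in which $u$ and $v$ are placed symmetrically about the positive $x$-axis, say $u=(\cos(\theta/2),\sin(\theta/2))$ and $v=(\cos(\theta/2),-\sin(\theta/2))$, so that the angle between them is exactly $\theta$. In the disk model $\gamma_{uv}$ is the arc of the unique circle that passes through $u,v$ and meets the unit circle orthogonally. By symmetry its center lies on the $x$-axis, say at $(c,0)$ with radius $\rho$. Combining the orthogonality condition $c^2=1+\rho^2$ with the requirement that the circle pass through $u$, I expect to solve and obtain $c=\sec(\theta/2)$ and $\rho=\tan(\theta/2)$.

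The geometric heart of the argument is then to identify the point of $\gamma_{uv}$ closest to the origin. Here I would use that the Poincaré metric is rotationally symmetric and that the hyperbolic distance from $0$ to a point is a strictly increasing function of its Euclidean distance from $0$; consequently the hyperbolic nearest point coincides with the Euclidean nearest point, which by symmetry is the point of the geodesic circle lying on the positive $x$-axis, at Euclidean distance $r_E=c-\rho=\sec(\theta/2)-\tan(\theta/2)$ from the origin. This passage from a hyperbolic minimization to a Euclidean one is the step that requires the most care, and it is where I expect the main obstacle to lie; but it follows directly from the radial form of the metric, and one should also check that this nearest point actually falls on the arc inside the disk.

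Finally, I would compute. Using the standard formula $R=d_{\B^2}(0,p)=\ln\frac{1+r_E}{1-r_E}$, so that $\tanh(R/2)=r_E$ and hence $\cosh(R)=\frac{1+r_E^2}{1-r_E^2}$, and simplifying $r_E^2=\frac{1-\sin(\theta/2)}{1+\sin(\theta/2)}$, the numerator and denominator collapse to give $\cosh(R)=1/\sin(\theta/2)$, which is exactly \eqref{eq_theta}. Apart from the closest-point identification, the remaining steps are routine trigonometric bookkeeping.
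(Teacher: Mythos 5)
The paper gives no proof of this lemma (it is explicitly left as an exercise), so there is nothing to compare against; your argument is a correct and complete way to fill that gap. The reduction to $\B^2$ via the totally geodesic plane spanned by $0,u,v$ is sound (the reflection across that plane fixes $u$ and $v$, hence fixes $\gamma_{uv}$ pointwise), the values $c=\sec(\theta/2)$, $\rho=\tan(\theta/2)$ are right, the identification of the hyperbolic nearest point with the Euclidean nearest point $(c-\rho,0)$ is justified by the radial form of the Poincar\'e metric (and that point does lie on the arc inside the disk for $0<\theta\leq\pi$), and the final computation $\cosh(R)=\frac{1+r_E^2}{1-r_E^2}=\frac{1}{\sin(\theta/2)}$ with $r_E^2=\frac{1-\sin(\theta/2)}{1+\sin(\theta/2)}$ checks out.
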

\noindent The proof of this Lemma is elementary and we leave it as an exercise for the reader.

\begin{proof}{of Theorem \ref{main1}:}
Without loss of generality and for notation simplicity let us assume that $X\subset\B^{2}$, where the embedding of $X$ in $\B^{2}$ is quasi--isometric (see \cite{Bonk}). Therefore, geodesics in $X$ are quasi--geodesics in $\B^{2}$. By Proposition 7.2 in \cite{Gromov} there exists a positive constant $C$, such that for every pair of point in $x$ and $y$ in $X$ the geodesic path $\gamma_{xy}$ connecting this two points in $X$ and the geodesic path $\tilde{\gamma}_{xy}$ connecting them in $\B^{2}$ satisfy that the Hausdorff distance 
$$
d_{H}(\gamma_{xy},\tilde{\gamma}_{xy})<C.
$$
For every $r>0$, let $B_{r}$ be the ball centred at $0$ and radius $r$ with the metric of $X$,
$$
B_{X}(0,r):=\{x\in X\,:\,d_{X}(0,x)\leq r\}.
$$
We are interested in the asymptotic behaviour of $\mathcal{L}_{n}(B_{X}(0,r))$ as $n$ increases. Fix $u$ in $S^{1}$, if the bi-infinite geodesic $\gamma_{uv}$ ($v\in S^{1}$) enters $B_{r}$ then the bi-infinite geodesic $\tilde{\gamma}_{uv}$ has to enter $B_{\B^{2}}(0,r+C)$. By Lemma \ref{lemma}, the later happens if and only if the spherical distance between $u$ and $v$ is greater that $\theta_{r+C}$ where
$$
\theta_{r+C}=2\cdot\mathrm{asin}\big(\cosh(r+C)^{-1}\big).
$$
Given $u\in S^{1}$ and $R>0$ we define 
$$
E_{u}^{R}:=\{v\in S^{1}\,:\,d_{S^{1}}(u,v)>2\cdot\mathrm{asin}(\cosh(R)^{-1})\}.
$$
Note that geometrically $E_{u}^{R}$ is a cap opposite to $u$ in $S^{1}$. Given the previous consideration it is clear that 
\begin{equation}
\limsup_{n\to\infty}{\frac{\mathcal{L}_{n}(B_{X}(0,r))}{\mathcal{L}_{n}(X_{n})}}\leq \int_{S^{1}}{\mu_{\infty}(E_{u}^{r+C})\,d\mu_{\infty}(u)}.
\end{equation}
Analogously, for $r>C$ we have 
\begin{equation}
\int_{S^{1}}{\mu_{\infty}(E_{u}^{r-C})\,d\mu_{\infty}(u)}\leq \liminf_{n\to\infty}{\frac{\mathcal{L}_{n}(B_{X}(0,r))}{\mathcal{L}_{n}(X_{n})}}.
\end{equation}
Now we analyze each case separately. Assume that $\mu_{\infty}$ is supported on one point. In this case, it is clear that for every $R>0$
$$
\int_{S^{1}}{\mu_{\infty}(E_{u}^{R})\,d\mu_{\infty}(u)}=0,
$$
and therefore, 
$$
\limsup_{n\to\infty}{\frac{\mathcal{L}_{n}(B_{X}(0,r))}{\mathcal{L}_{n}(X_{n})}}=0,
$$
for every $r>0$.

\vspace{0.3cm}
\noindent Assume that $\mu_{\infty}$ is purely atomic and supported in more that one point. Then there exists a sequence of atoms $\{x_{n}\}_{n}$ in $S^{1}$ with weights $\{p_{n}\}_{n}$. It is clear that for every $R>0$ and every $x\in S^{1}$,
$$
\mu_{\infty}(E_{x}^{R})\leq 1-\mu_{\infty}(x).
$$
Then,
\begin{eqnarray*}
\int_{S^{1}}{\mu_{\infty}(E_{u}^{R})\,d\mu_{\infty}(u)} & = & \sum_{n=1}^{\infty}{\mu_{\infty}(x_{n})\mu_{\infty}(E_{x_{n}}^{R})}\\
& \leq & \sum_{n=1}^{\infty}{\mu_{\infty}(x_{n})(1-\mu_{\infty}(x_{n}))}\\
& = & \sum_{n=1}^{\infty}{p_{n}(1-p_{n})}\\
& = & 1- \sum_{n=1}^{\infty}{p_{n}^{2}}=\alpha_{*}<1.
\end{eqnarray*}
Hence, for every $r>0$
$$
\limsup_{n\to\infty}{\frac{\mathcal{L}_{n}(B_{X}(0,r))}{\mathcal{L}_{n}(X_{n})}}\leq \alpha_{*}<1.
$$
Concluding that the $\alpha$--core is empty for every $\alpha>\alpha_{*}$.

\vspace{0.3cm}
\noindent Finally, assume that $\mu_{\infty}$ is non--atomic. Since the unit circle $S^{1}$ is compact we see that for every $\epsilon>0$ there exists $R$ sufficiently large such that 
$$
\mu_{\infty}(E_{x}^{R})\geq 1-\epsilon,
$$
for every $x\in S^{1}$. Then 
$$
\int_{S^{1}}{\mu_{\infty}(E_{u}^{R+C})\,d\mu_{\infty}(u)}\geq 1-\epsilon,
$$
and hence 
$$
\liminf_{n\to\infty}{\frac{\mathcal{L}_{n}(B_{X}(0,R+C))}{\mathcal{L}_{n}(X_{n})}}\geq 1-\epsilon.
$$
Proving that, $0$ belongs to the $(1-\epsilon)$--core. Since $\epsilon$ is arbitrary we finish the proof.
\end{proof}

\begin{cor}
Let $X$ be an infinite hyperbolic graph rooted at $x_{0}$. Let $X_{n}=B(x_0,n)$ the ball of center $x_{0}$ and radius $n$ with the uniform measure. Then $x_0$ belongs to the $\alpha$--core for every $\alpha$.
\end{cor}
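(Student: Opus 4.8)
The plan is to verify the hypotheses of Theorem \ref{main1} for this particular exhaustion and measure, and then to invoke its third alternative. Concretely, I would first reduce, exactly as in the proof of Theorem \ref{main1}, to a quasi--isometric embedding $X\hookrightarrow\B^{m}$ (by \cite{Bonk}) with $x_{0}\mapsto 0$, so that the visual measures $\mu_{n}^{v}$ become Borel probability measures on the compact sphere $\partial\B^{m}=S^{m-1}$. The sequence $\mu_{n}^{v}$ converges to the natural visual measure $\mu_{\infty}$ on $\partial X$; in any case, since the probability measures on a compact metric space form a weak-$*$ compact set, it suffices to understand the subsequential limits. The whole corollary then reduces to a single claim: the limit $\mu_{\infty}$ is \emph{non--atomic}. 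Granting this, the third bullet of Theorem \ref{main1} places $0$ in the $\alpha$--core for every $\alpha<1$, which is the content of the statement.

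The heart of the argument is therefore non--atomicity, i.e. $\mu_{\infty}(\{\xi\})=0$ for every $\xi\in\partial X$. Fix $\xi$ and let $A_{\epsilon}\subset\partial X$ be the cap of visual radius $\epsilon$ about $\xi$; it is enough to bound $\limsup_{n}\mu_{n}^{v}(A_{\epsilon})$ by a quantity tending to $0$ with $\epsilon$. By definition $\mu_{n}^{v}(A_{\epsilon})$ is the fraction of nodes of $B(x_{0},n)$ lying on some geodesic ray $[0,\eta)$ with $\eta\in A_{\epsilon}$. The key geometric input is the visual--metric estimate of Theorem \ref{visual_teo} (equivalently Lemma \ref{lemma} in the embedded picture): a node $x$ with $d(x_{0},x)=d$ can point into $A_{\epsilon}$ only if its \emph{shadow} --- the set of boundary points joined to $x_{0}$ by a geodesic through $x$ --- has visual diameter $\asymp a^{-d}$ and meets $A_{\epsilon}$.

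I would then count the offending nodes scale by scale. Because the shadows of the distance-$d$ nodes cover $\partial X$ with bounded multiplicity (a standard consequence of $\delta$--hyperbolicity) and each has diameter $\asymp a^{-d}$, the number of distance-$d$ nodes whose shadow meets $A_{\epsilon}$ is $O(1)$ while $a^{-d}\gtrsim\epsilon$, and is $\lesssim\epsilon\,|S(x_{0},d)|$ once $a^{-d}\lesssim\epsilon$, where $S(x_{0},d)$ denotes the distance-$d$ sphere. Summing over $d\le n$ bounds the numerator by $O(\log(1/\epsilon))+C\epsilon\,|B(x_{0},n)|$, whereas the denominator is $|B(x_{0},n)|\to\infty$. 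Hence $\limsup_{n}\mu_{n}^{v}(A_{\epsilon})\le C\epsilon$, and letting $\epsilon\to 0$ gives $\mu_{\infty}(\{\xi\})=0$. Thus $\mu_{\infty}$ is non--atomic, and Theorem \ref{main1} finishes the proof.

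The step I expect to be the main obstacle is precisely this uniform node count: controlling, independently of $n$, the proportion of the exponentially growing ball that is aimed into a shrinking cap. This is where genuine negative curvature is essential --- one needs the shadows of spheres to shrink like $a^{-d}$ and to cover $\partial X$ with bounded overlap, so that localizing to an $\epsilon$--cap captures only an $O(\epsilon)$ fraction of the mass. It is worth noting that this is exactly the point that fails for elementary examples such as a ray or a bi-infinite line, whose boundary has one or two points and whose limiting visual measure is atomic; so some non--elementarity (equivalently, exponential growth) is implicitly required for the conclusion as stated.
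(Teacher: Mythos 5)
Your reduction to Theorem \ref{main1} is certainly the intended argument: the paper states this corollary with no proof at all, so the only missing content is precisely the claim you isolate, namely that the limiting visual measure of the uniform measures on the balls $B(x_0,n)$ is non--atomic. On that point your proposal is more honest than the paper, but the counting argument you sketch does not close the gap. The bound ``at most $C\epsilon\,|S(x_0,d)|$ of the distance-$d$ nodes aim into an $\epsilon$--cap'' is not a consequence of $\delta$--hyperbolicity plus local finiteness: it needs the shadows of the nodes of $S(x_0,d)$ to have visual diameter bounded \emph{below} by a constant times $a^{-d}$ and to cover $\partial X$ with uniformly bounded overlap, i.e.\ essentially an Ahlfors-regularity statement for the boundary relative to the counting measure on spheres. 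Nothing in the hypotheses ``infinite, locally finite, hyperbolic'' provides this, and you correctly identify this step as the main obstacle --- it is in fact an unfilled hole rather than a technicality.

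Moreover the statement fails without some such assumption, so no proof can exist at this level of generality. You already note the ray and the line ($\mu_\infty$ a single atom, resp.\ two atoms of mass $1/2$, so by the first resp.\ second bullet of Theorem \ref{main1} the root is in no $\alpha$--core, resp.\ in no $\alpha$--core with $\alpha>1/2$). But the failure is not confined to elementary or subexponentially growing examples: take a ray $v_0,v_1,v_2,\dots$ and attach $m_d$ pendant leaves to $v_d$ with $m_d$ growing super-exponentially. This is an infinite, locally finite tree (hence $0$--hyperbolic) of arbitrarily fast growth, yet for $X_n=B(v_0,n)$ almost all of the uniform mass sits on the leaves attached to $v_{n-1}$, almost all of the traffic runs between those leaves and passes only through $v_{n-1}$, and $v_0$ lies in no $\alpha$--core. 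So the correct repair is not ``non-elementarity'' or ``exponential growth'' but an explicit regularity or equidistribution hypothesis on the sequence $\mu_n^{v}$ --- for instance, convergence to a non--atomic limit, which is exactly the hypothesis of Theorem \ref{main1} that the corollary silently asserts rather than verifies. Your strategy is the right one; what is missing is a hypothesis under which your scale-by-scale count is actually valid.
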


\noindent The next results shows that the core is well concentrated for hyperbolic graphs. More specifically, $C_{\alpha}$ is finite for every $\alpha$.

\begin{teo}
Assume that the measure $\mu_{\infty}$ is non--atomic. Then for every $r>0$ and every $\alpha\in [0,1)$, there exists a positive constant $R=R(\alpha,r)$ such that if $x\in X$ and $d(x_{0},x)>R$ then
\begin{equation}
\limsup_{n\to\infty}{\frac{\mathcal{L}_{n}(B_{X}(0,r))}{\mathcal{L}_{n}(X_{n})}}<\alpha.
\end{equation} 
\end{teo}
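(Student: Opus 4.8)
\emph{The plan} is to mirror the argument in the proof of Theorem \ref{main1}, but now to bound the traffic through the ball $B_{X}(x,r)$ \emph{centred at the far--away node $x$} (the centre is $x$, not $0$; this is precisely the ``finiteness of $C_{\alpha}$'' companion to Theorem \ref{main1}). First I would invoke the Bonk--Kleiner embedding exactly as before, assuming $X\subset\B^{2}$ quasi--isometrically with $0$ the image of $x_{0}$, and with Hausdorff constant $C$ controlling how far an $X$--geodesic strays from the corresponding $\B^{2}$--geodesic (Proposition 7.2 in \cite{Gromov}, see \cite{Bonk}). Writing $\mu_{\infty}$ for the limiting visual probability measure, the same estimate that produced the bounds on $\mathcal{L}_{n}(B_{X}(0,r))$ in Theorem \ref{main1} gives
\begin{equation}
\limsup_{n\to\infty}{\frac{\mathcal{L}_{n}(B_{X}(x,r))}{\mathcal{L}_{n}(X_{n})}}\leq (\mu_{\infty}\times\mu_{\infty})\big(\{(u,v)\in S^{1}\times S^{1}\,:\,\tilde{\gamma}_{uv}\cap B_{\B^{2}}(x,r+C)\neq\emptyset\}\big),
\end{equation}
where $\tilde{\gamma}_{uv}$ is the bi--infinite $\B^{2}$--geodesic joining $u$ and $v$. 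Since the embedding is a quasi--isometry, $D:=d_{\B^{2}}(0,x)\to\infty$ whenever $d_{X}(x_{0},x)\to\infty$, so it suffices to make the right--hand side small for $D$ large.

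The geometric heart of the argument is the following claim: if $\tilde{\gamma}_{uv}$ meets $B_{\B^{2}}(x,r+C)$, then at least one of the endpoints $u,v$ lies in the spherical cap $\mathrm{Cap}(\xi_{x},\eta)$ of angular radius $\eta=\eta(D,r+C)$ centred at the radial projection $\xi_{x}\in S^{1}$ of $x$, and moreover $\eta(D,r+C)\to 0$ as $D\to\infty$ for fixed $r+C$. I would prove this in the upper half--plane model normalized so that $\xi_{x}=\infty$ and $x$ sits at height $e^{D}$ on the vertical ray through $0$: the geodesic $\tilde{\gamma}_{uv}$ is then a Euclidean semicircle of maximal height $|u-v|/2$, and comparing heights shows that a point of $\tilde{\gamma}_{uv}$ can lie within hyperbolic distance $r+C$ of $x$ only if $|u-v|\gtrsim e^{D-(r+C)}$; hence at least one endpoint has large Euclidean magnitude, i.e.\ lies within angular distance $\eta\asymp e^{r+C}e^{-D}$ of $\xi_{x}$ back in the disk model. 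Granting the claim, the event above is contained in $\{u\in\mathrm{Cap}(\xi_{x},\eta)\}\cup\{v\in\mathrm{Cap}(\xi_{x},\eta)\}$, and a union bound over the two marginals yields
\begin{equation}
\limsup_{n\to\infty}{\frac{\mathcal{L}_{n}(B_{X}(x,r))}{\mathcal{L}_{n}(X_{n})}}\leq 2\,\mu_{\infty}\big(\mathrm{Cap}(\xi_{x},\eta)\big).
\end{equation}

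The final and genuinely essential step is to turn this into an estimate uniform in $x$, since the centre $\xi_{x}$ of the cap ranges over all of $S^{1}$ as $x$ ranges over far--away nodes. Here I would use that a non--atomic finite Borel measure on the compact space $S^{1}$ has a uniform modulus of continuity on caps, namely $\sup_{\xi\in S^{1}}\mu_{\infty}(\mathrm{Cap}(\xi,\eta))\to 0$ as $\eta\to 0$. This follows by a standard compactness argument: if some $\epsilon>0$ were attained along caps $\mathrm{Cap}(\xi_{k},1/k)$, a convergent subsequence $\xi_{k}\to\xi_{*}$ would force $\mu_{\infty}(\{\xi_{*}\})\geq\epsilon$, contradicting non--atomicity. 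Given $\alpha<1$ and $r$, I would then choose $\eta_{0}$ so small that $2\sup_{\xi}\mu_{\infty}(\mathrm{Cap}(\xi,\eta_{0}))<\alpha$, and choose $R=R(\alpha,r)$ so large that $d_{X}(x_{0},x)>R$ forces $D$ large enough that $\eta(D,r+C)\leq\eta_{0}$; combining with the previous display completes the proof, and since $X$ is locally finite this shows $C_{\alpha}\subset B(x_{0},R)$ is finite. I expect the main obstacle to be making the geometric claim quantitative with the explicit decaying $\eta(D,r+C)$ \emph{and} pairing it with the uniform-in-direction measure estimate: the two must be chained carefully, because the pointwise fact that each individual cap has vanishing measure is by itself too weak, and non--atomicity on a compact boundary is exactly what upgrades it to the uniform bound needed here.
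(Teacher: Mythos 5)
Your proposal is correct, and you have (rightly) read the displayed formula as a bound on the traffic through $B_{X}(x,r)$, the ball centred at the far-away node $x$, which is what the paper actually proves. Your overall architecture matches the paper's -- quasi-isometric embedding into $\B^{2}$, reduction to a $\mu_{\infty}\times\mu_{\infty}$ estimate on pairs of boundary points whose geodesic enters $B_{\B^{2}}(x,r+C)$ -- but the key step is carried out differently. The paper conjugates by an explicit M\"obius transformation $\phi_{t}$ taking $x$ to the origin, pulls back $\mu_{\infty}$, and argues that the resulting measures converge weakly to the Dirac mass at the boundary point opposite to the radial direction of $x$, so that the integral $\int\mu_{\infty}^{(t)}(E_{u}^{r+C})\,d\mu_{\infty}^{(t)}(u)$ tends to $0$. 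You instead stay in the original coordinates, prove a direct half-plane estimate showing that any geodesic meeting $B_{\B^{2}}(x,r+C)$ must have an endpoint in a cap of angular radius $\eta\asymp e^{r+C}e^{-D}$ about the radial projection $\xi_{x}$, apply a union bound, and then invoke the uniform modulus of continuity $\sup_{\xi}\mu_{\infty}(\mathrm{Cap}(\xi,\eta))\to 0$ for non-atomic measures on a compact boundary. The two arguments rest on the same fact -- non-atomicity kills the mass accumulating near $\xi_{x}$ -- but your version buys something the paper's write-up leaves implicit: the constant $R(\alpha,r)$ must be uniform over all directions $\xi_{x}\in S^{1}$ as $x$ ranges over the graph, and the paper's choice of $R_{0}$ (made after rotating a fixed $x$ to the imaginary axis) does not visibly address this; your compactness argument for the uniform cap estimate is exactly the missing justification. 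The one point you should make explicit when writing this up is the portmanteau-type step passing from weak convergence $\mu_{n}^{v}\to\mu_{\infty}$ to the $\limsup$ bound against the (closed) set of bad pairs, but the paper's own proof relies on the same step without comment.
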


\begin{proof}
As we did in the previous theorem we can assume that $X$ is quasi--isometrically embedded in $\B^{2}$. Therefore, as we saw previously, there exists a constant $C>0$ such that
$$
\limsup_{n\to\infty}{\frac{\mathcal{L}_{n}(B_{X}(0,r))}{\mathcal{L}_{n}(X_{n})}}\leq \int_{S^{1}}{\mu_{\infty}(E_{u}^{r+C})\,d\mu_{\infty}(u)}.
$$ 
Fix $r>0$ and $\alpha\in [0,1)$. Let $x\in X$ such that $d(x,0)=R$. Rotating the Poincar\'e disk if necessary we can assume that $x=it$ for some $0<t<1$. Let $\phi_{t}$ be the isometry $\phi_{t}:\B^{2}\to B^{2}$ given by 
$$
\phi_{t}(z):=\frac{z-it}{it+z}.
$$
This map takes $x$ to $0$ and extends to an homeomorphism $\phi_{t}:S^{1}\to S^{1}$. Let 
$$
\mu_{\infty}^{(t)}:=\phi_{t}^{*}\mu_{\infty}
$$ 
be the pull--back measure of $\mu_{\infty}$ under this map. This measure is absolutely continuous with respect to $\mu_{\infty}$, and it can be proved easily that its Radon--Nykodim derivative is equal to 
$$
d\mu_{\infty}^{(t)}(\theta)=\frac{1-t^2}{1+t^2+2t\sin(\theta)}\,d\mu_{\infty}(\theta).
$$
If we apply the isometry then the point $x$ goes to $0$ and $\mu_{\infty}$ gets transformed in $\mu_{\infty}^{(t)}$. Therefore,
$$
\limsup_{n\to\infty}{\frac{\mathcal{L}_{n}(B_{X}(x,r))}{\mathcal{L}_{n}(X_{n})}}\leq \int_{S^{1}}{\mu_{\infty}^{(t)}(E_{u}^{r+C})\,d\mu_{\infty}^{(t)}(u)}.
$$ 
As $R$ goes to $\infty$ the value of $t$ increases to $1$ and the measure $\mu_{\infty}^{(t)}$ gets more and more concentrated at the point $-i$. As a matter of fact, the measures $\mu_{\infty}^{(t)}$ converge weakly to the Dirac measure concentrated at $v=-i$ as $t\to 1^{-}$. Hence, given $\epsilon>0$ there exists $R_{0}>0$ such that if $R\geq R_{0}$ then 
$$
\mu_{\infty}^{(t)}(\mathcal{C})>1-\epsilon\quad\text{where}\quad \mathcal{C}=\Big\{e^{i\theta}\,:\,\theta\in\Big(\frac{3\pi}{2}-\epsilon,\frac{3\pi}{2}+\epsilon\Big)\Big\}.
$$
Therefore, for a fixed $r>0$ 
$$
\limsup_{R\to\infty}{\int_{S^{1}}{\mu_{\infty}^{(t)}(E_{u}^{r+C})\,d\mu_{\infty}^{(t)}(u)}}=0.
$$
Then there exists $R=R(\alpha,r)>0$ such that for every $x\in X$ such that $d(x,0)>R$ then 
$$
\limsup_{n\to\infty}{\frac{\mathcal{L}_{n}(B_{X}(x,r))}{\mathcal{L}_{n}(X_{n})}}<\alpha.
$$
\end{proof}

\begin{remark}
Let $\{k_{l}\}_{l=0}^{\infty}$ be a sequence of positive integers and let $T$ be the associated tree as defined in Section \ref{tree}. All the results extend to the general case but for simplicity of the notation let us assume that $k_{l}=k>1$. Therefore, $T$ is a $k$ regular tree.  Let $v$ be any node in the tree and let $l$ be its depth. As we already discussed in Section \ref{tree}, the traffic the asymptotic proportion of the traffic passing through node $v$ is
$$
ap(v)=\frac{1}{k^{l}}\Bigg(2-\frac{1}{k^{l}}-\frac{1}{k^{l+1}} \Bigg).
$$
Therefore, we see that every node in $T$ belongs to $C_{\alpha}$ for some $\alpha$ and hence 
$$
\mathcal{C}=\cup_{\alpha>0}{\,\,C_{\alpha}}=T.
$$
It is clear also that given $\alpha>0$ the set $C_{\alpha}$ is finite and moreover it is equal to $T_{n}$ for some $n$.
\end{remark}

\end{document}